\documentclass[12pt]{amsart}

\usepackage{ucs}

\usepackage{amssymb}
\usepackage{amsthm}
\usepackage{amsmath}
\usepackage{latexsym}
\usepackage[cp1251]{inputenc}
\usepackage{graphicx}
\usepackage{wrapfig}
\usepackage{caption}
\usepackage{subcaption}
\usepackage{indentfirst}
\usepackage[left=2.5cm,right=2.5cm,top=2.5cm,bottom=2.5cm,bindingoffset=0cm]{geometry}
\usepackage{enumerate}

\def\cB{{\mathcal B}}

\def\cK{{\mathcal K}}
\def\cL{{\mathcal L}}

\DeclareMathOperator{\alt}{Alt}
\DeclareMathOperator{\aut}{Aut}
\DeclareMathOperator{\AGL}{AGL}

\DeclareMathOperator{\cay}{Cay}

\DeclareMathOperator{\Inn}{Inn}

\DeclareMathOperator{\orb}{Orb}
\DeclareMathOperator{\out}{Out}

\DeclareMathOperator{\soc}{Soc}

\DeclareMathOperator{\sym}{Sym}

\DeclareMathOperator{\poly}{poly}

\DeclareMathOperator{\Reg}{Reg}

\DeclareMathOperator{\hol}{Hol}

\makeatletter
\def\@seccntformat#1{\csname the#1\endcsname. }
\def\@biblabel#1{#1.}

\makeatother

\title[On Cayley representations of central Cayley graphs]{On Cayley representations of central Cayley graphs over almost simple groups}

\author{Jin Guo, Wenbin Guo}
\address{School of Science, Hainan University, Haikou, Hainan, 570228, P.R. China}
\email{guojinecho@163.com, \ wbguo@ustc.edu.cn}

\author{Grigory Ryabov}
\address{Sobolev Institute of Mathematics, Novosibirsk, Russia}
\address{Novosibirsk State Technical University, Novosibirsk, Russia}
\email{gric2ryabov@gmail.com}

\author{Andrey V. Vasil'ev}
\address{Sobolev Institute of Mathematics, Novosibirsk, Russia}
\address{School of Science, Hainan University, Haikou, Hainan, 570228, P.R. China}
\email{vasand@math.nsc.ru}

\thanks{J. Guo is supported by the National Natural Science Foundation of China (No. 11961017),  W. Guo and A. V. Vasil'ev are supported by National Natural Science Foundation of China (No. 12171126), G. Ryabov is supported by the Mathematical Center in Akademgorodok under agreement No.~075-15-2022-281 with the Ministry of Science and Higher Education of the Russian Federation.}

\date{}

\newtheorem{prop}{Proposition}[section]
\newtheorem{theorem}{Theorem}

\newtheorem{lemm}[prop]{Lemma}
\newtheorem{theo}[prop]{Theorem}
\newtheorem{corl}[prop]{Corollary}

\theoremstyle{definition}
\newtheorem{defn}[theorem]{Definition}
\newtheorem*{rem}{Remark}

\begin{document}

%\vspace{\baselineskip}
%\vspace{\baselineskip}
%
%\vspace{\baselineskip}
%
%\vspace{\baselineskip}

\begin{abstract}
A Cayley graph over a group $G$ is said to be \emph{central} if its connection set is a normal subset of $G$. We prove that every central Cayley graph over a simple group $G$ has at most two pairwise nonequivalent Cayley representations over $G$ associated with the subgroups of $\sym(G)$ induced by left and right multiplications of~$G$. We also provide an algorithm which, given a central Cayley graph $\Gamma$ over an almost simple group $G$ whose socle is of a bounded index, finds the full set of pairwise nonequivalent Cayley representations of $\Gamma$ over $G$ in time polynomial in size of~$G$.
\\
\textbf{Keywords}: Cayley graph, Cayley representation, Cayley isomorphism, almost simple group.
\\
\textbf{MSC}: 05C60, 05E18, 20B35, 20D06.
\end{abstract}

\maketitle

\section{Introduction}

Let $G$ be a group (throughout the paper all groups are assumed to be finite) and $X$ a subset of $G$ not containing the identity element. By a \emph{Cayley graph} $\cay(G,X)$ over $G$ with \emph{connection set} $X$, we mean a directed graph  with vertex set $G$ and arc set $\{(g,xg):~g\in G,~x\in X\}$. A \emph{Cayley representation} of an arbitrary directed graph $\Gamma$ over a group $G$ is defined to be a Cayley graph over~$G$ isomorphic to~$\Gamma$. Two Cayley representations of $\Gamma$ over $G$ are \emph{equivalent} if the corresponding Cayley graphs are \emph{Cayley isomorphic}, i.e., there exists a group automorphism of $G$ which is at the same time an isomorphism between the graphs. In general, a given graph may have many Cayley representations over a given group even up to equivalence, see, e.g.,~\cite{Li}. The problem of finding pairwise nonequivalent Cayley representations of a given graph is of substantial interest from a computational complexity point of view. For example, the isomorphism problem for Cayley graphs can be polynomially reduced to it for groups having a bounded number of generators. In the present paper, we are interested in a special case of this problem formed by the two conditions imposed on the input graphs and groups.

%The computational problem of finding all Cayley representations of a given graph is not easier than the isomorphism problem for Cayley graphs which is still widely open (the later is reducible to the former, but the validity of the converse statement is unknown). In the present paper, we are interested in a special case of this problem formed by the two conditions imposed on the input graphs and groups.

First, we assume that a Cayley graph $\Gamma=\cay(G,X)$ is {\em central}, i.e., the connection set $X$ of $\Gamma$ is normal: $X^g=X$ for every $g\in G$. Obviously, every Cayley graph over an abelian group is central, it explains why central graphs are also called quasiabelian in \cite{WX,Zgr}. The problem of finding pairwise nonequivalent Cayley representations was solved for cyclic groups~\cite{EP} and for some abelian groups of small rank~\cite{NP,Ry}. We will show that such results are still possible even for groups that are very far from abelian, if we restrict ourselves to {\em central Cayley representations} that is Cayley representations of central Cayley graphs.

Namely, and this is our second condition, we suppose that $G$ is a simple or almost simple group. Note that Cayley graphs over simple groups were a subject of intense interest for many years. It suffices to mention the positive solution of the Babai--Kantor--Lubotzky conjecture \cite{BKL} stating that all nonabelian simple groups are expanders in a uniform way, see, e.g., \cite[Conjecture~2.5]{Lub}. Some general description of the full automorphism group of a connected Cayley graph over a simple group was given in~\cite{FPW}. A polynomial algorithm solving the isomorphism problem for central Cayley graphs over almost simple groups was proposed in~\cite{PV}.

Our first result concerns simple groups. It is worth noting that it is based (see details in Section~\ref{sec:simple}) on the following fact: every nonabelian simple group $G$ contains a class of involutions invariant with respect to $\aut(G)$ \cite[Lemma~12.1]{FGS}.

\begin{theorem}\label{th:crsg}
Every directed graph $\Gamma$ has at most two pairwise nonequivalent central Cayley representations over a simple group $G$. Moreover, if $\Gamma$ is undirected, then all its central Cayley representation over $G$ are equivalent.
\end{theorem}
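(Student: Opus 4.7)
The plan is to translate the problem into one about regular subgroups of $A := \aut(\Gamma)$. Under the standard correspondence, equivalence classes of Cayley representations of $\Gamma$ over $G$ are in bijection with $A$-conjugacy classes of regular subgroups $R' \leq A$ isomorphic to $G$, and a representation is central precisely when $C_{\sym(V)}(R') \leq A$, where $V = V(\Gamma)$. Fix one central Cayley representation $\Gamma = \cay(G,X)$, and write $R$ and $L$ for the right and left regular representations of $G$ on itself. Normality of $X$ gives $L, R \leq A$, and $L = C_{\sym(G)}(R)$; the theorem becomes the assertion that every central regular subgroup of $A$ isomorphic to $G$ is $A$-conjugate to $L$ or to $R$.

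The abelian simple case $G \cong \mZ/p\mZ$ is immediate: $L = R$, and the classical Cayley isomorphism theorem for circulants of prime order~\cite{EP} yields a unique equivalence class. So assume $G$ is nonabelian simple. By~\cite[Lemma~12.1]{FGS} there exists an $\aut(G)$-invariant conjugacy class $C \subseteq G$ of involutions. For any regular $R' \leq A$ with $R' \cong G$, the $\aut(G)$-invariance of $C$ makes the subset $C^{R'} \subseteq R'$ obtained by transporting $C$ along any isomorphism $G \to R'$ well defined, and $C^{R'}$ consists of fixed-point-free involutions that generate $R'$ (being a nontrivial normal set in a nonabelian simple group). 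If the representation corresponding to $R'$ is central, $L' := C_{\sym(V)}(R') \leq A$ with $L' \cong G$, $L' \cap R' = 1$, and $L' \times R' \leq A$ is transitive; the same holds for $L \times R$.

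The heart of the proof is then to show that $L' \times R'$ is $A$-conjugate to $L \times R$. A Goursat-style analysis of subgroups of $G \times G$ isomorphic to $G$, together with the fact that a finite nonabelian simple group has no fixed-point-free automorphism, identifies $L$ and $R$ as the only regular subgroups of $L \times R$, so the conjugacy of these direct products forces $R'$ to be $A$-conjugate to $L$ or to $R$. To match $(L',R')$ with $(L,R)$ up to $A$-conjugacy, I would single out the intrinsic set of type-$C$ fixed-point-free involutions in $A$ (which contains both $C^L \cup C^R$ and $C^{L'} \cup C^{R'}$) and combine normalizer and Sylow arguments with the description of automorphism groups of Cayley graphs over simple groups in~\cite{FPW}. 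This is the step I expect to be the main obstacle, since it must leverage the precise structure of $A$ as an extension of $L \times R$.

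Finally, for undirected $\Gamma$ one has $X = X^{-1}$, so the inversion map $\iota \colon g \mapsto g^{-1}$ lies in $A$: an arc $(g,xg)$ is sent to $(g^{-1}, g^{-1}x^{-1}) = (g^{-1}, (g^{-1}x^{-1}g)g^{-1})$, and $g^{-1}x^{-1}g \in X$ by $X = X^{-1}$ and normality of $X$. The one-line computation $\iota L_h \iota^{-1}(g) = \iota(h g^{-1}) = g h^{-1} = R_{h^{-1}}(g)$ gives $\iota L \iota^{-1} = R$, so $L$ and $R$ are $A$-conjugate and the two potential equivalence classes collapse into one.
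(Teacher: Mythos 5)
There is a genuine gap at exactly the point you flag as ``the main obstacle.'' Your reduction is fine up to the claim that $L'\times R'$ is $A$-conjugate to $L\times R$, but that claim is never proved, and the strategy you sketch for it (intrinsic sets of type-$C$ involutions, Sylow and normalizer arguments, the description in~\cite{FPW}) does not obviously close it: \cite{FPW} concerns connected Cayley graphs and describes $\aut(\Gamma)$ in a form that does not by itself control conjugacy of transitive subgroups of order $n^2$, and a central Cayley graph need not be connected. Without some a priori bound on $A$ there is no reason for two copies of $G\times G$ sitting inside $A$ to be conjugate \emph{in} $A$; indeed, the whole difficulty of the theorem is concentrated here. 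The paper closes this step with a classification-based structural result: after disposing of the $2$-transitive case (where $A=\sym(G)$), it shows that $A$ is primitive (a block through $e$ would be a proper nontrivial normal subgroup of $G$) and then invokes the Liebeck--Praeger--Saxl classification of primitive groups containing a regular simple subgroup (\cite[Theorem~1.4]{LPS}, via~\cite{PV}) to get $G^*\le A\le D(2,G)$, where $D(2,G)=\langle\sigma\rangle\ltimes(\aut(G)\ltimes G_r)$ and $|D(2,G)|\le 2n^2\log n$. Since $G^*=G_l\times G_r$ is normal in $D(2,G)$, any regular $H\cong G$ satisfies $H\cap G^*\trianglelefteq H$, and the order bound rules out $H\cap G^*=1$; hence $H\le G^*$ \emph{itself} (no conjugation needed), and only then does the Goursat/diagonal analysis apply. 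Your proof needs this (or an equivalent) structural input, and as written it is missing.

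Two smaller points. First, for the diagonal subgroups $G_\tau=\{g_l(g^\tau)_r\}$ you rule out regularity via ``a nonabelian simple group has no fixed-point-free automorphism''; this is correct and sufficient (if $\tau$ fixes $g\ne 1$ then $g_lg_r$ is a nontrivial element of $G_\tau$ fixing $e$), but it rests on the Rowley/Thompson theorems, whereas the paper uses the weaker and more hands-on fact that some conjugacy class of involutions is $\aut(G)$-invariant (\cite[Lemma~12.1]{FGS}), exhibiting an explicit fixed point $tx$ of $t_l(t^\tau)_r$. Either works once $H\le G^*$ is known. Second, your abelian case (Turner's theorem for prime-order circulants) and your undirected case ($\iota:g\mapsto g^{-1}$ lies in $A$ and conjugates $L$ to $R$) are both correct and agree with the paper's Lemma~\ref{lem:central}(ii).
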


\begin{rem} The second statement of the theorem has an equivalent reformulation: every undirected central Cayley graph over a simple group is a {\em CI-graph}, cf. \cite[Definition~3.1]{Li}.
\end{rem}

As we will see in Section~\ref{sec:example}, the number of pairwise nonequivalent central Cayley representations of a graph over an almost simple group $G$ cannot be bounded by any constant not depending on the size of~$G$. For example, the {\em complete transposition graph} over the symmetric group $\sym(m)$ with connection set consisting of all transpositions of $\sym(m)$ has at least $[\frac{m}{4}]$ such representations (see Corollary~\ref{cor:example} and the remark after it).

Nevertheless, we show that the number of pairwise nonequivalent central Cayley representations of a graph $\Gamma$ over an almost simple group $G$ of order $n$ with the socle $\soc(G)$ of index at most~$c$ is bounded by $n^{f(c)}$, where $f$ is a function not depending on $n$ (Theorem~\ref{th:baseAS}). Moreover, the following holds.

\begin{theorem}\label{th:algorithm}
Let $\cK_c$ be the class of almost simple groups $G$ with $|G:\soc(G)|\leq c$. Given a central Cayley graph $\Gamma$ over $G\in\cK_c$ of order $n$, the full set of pairwise nonequivalent Cayley representations of~$\Gamma$ can be found in time polynomial in~$n$.
\end{theorem}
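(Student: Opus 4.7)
The plan is to recast the problem in the language of regular subgroups of $\aut(\Gamma)$. Each Cayley representation $\cay(G,Y)$ of $\Gamma$ over $G$ is determined by a regular subgroup $H\leq\aut(\Gamma)$ with $H\cong G$ together with an identification $G\to H$ that reads off the connection set $Y$ from the action of $H$ on the identity vertex; two representations are equivalent iff the pairs lie in the same orbit of a natural action combining $\aut(\Gamma)$-conjugation and $\aut(G)$-twisting. By Theorem~\ref{th:baseAS}, the output has length at most $n^{f(c)}$, which is polynomial in $n$ for $G\in\cK_c$, so the challenge is only to enumerate a transversal efficiently without computing $\aut(\Gamma)$ in full.

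The key structural handle is the socle. If $H\leq\aut(\Gamma)$ is regular with $H\cong G$, then $S_H:=\soc(H)\cong S=\soc(G)$ is a semiregular subgroup of $\aut(\Gamma)$ with exactly $|G:S|\leq c$ orbits of size $|S|$ on $V(\Gamma)=G$, and the restriction of $\Gamma$ to any single orbit is a central Cayley graph over the simple group $S$. Theorem~\ref{th:crsg} then ensures that on each orbit there are at most two equivalence classes of regular $S$-actions. Since $|G:S|\leq c$ is bounded, the number of ways to combine these local choices into a global semiregular $S$-subgroup of $\aut(\Gamma)$, and subsequently to extend it to a regular overgroup of index at most $c$, is polynomial in $n$.

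The algorithm then proceeds in four stages. (i)~Enumerate candidate orbit partitions of $V(\Gamma)$ into $|G:S|$ parts of size $|S|$, exploiting that $\Gamma$ restricted to each part must be a central Cayley graph over $S$ and that $G_r,G_\ell\leq\aut(\Gamma)$ by centrality of $\Gamma$. (ii)~For each candidate partition, enumerate the semiregular $S$-subgroups $S_0\leq\aut(\Gamma)$ realising it, by applying a constructive version of Theorem~\ref{th:crsg} blockwise and patching the local choices consistently. (iii)~For each $S_0$, enumerate regular overgroups $H\leq\aut(\Gamma)$ with $S_0\trianglelefteq H\cong G$; since $|H:S_0|\leq c$ is bounded, this is a brute-force search within a polynomially describable subgroup of $\aut(\Gamma)$ containing $S_0$ and $G_r$. (iv)~Compute the connection set $Y\subseteq G$ associated with each $H$, and filter out duplicates via the Cayley isomorphism test for central Cayley graphs over $\cK_c$ from~\cite{PV} combined with enumeration of $\aut(G)$, whose order is $n^{O(1)}$ for $G\in\cK_c$.

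The principal obstacle is stage~(ii): turning the existence statement of Theorem~\ref{th:crsg} into an effective procedure that reconstructs the admissible regular $S$-actions blockwise while accessing $\aut(\Gamma)$ only through local combinatorial queries on~$\Gamma$. The proof of Theorem~\ref{th:crsg} singles out an $\aut(S)$-invariant class of involutions in $S$; the corresponding involutions inside $\aut(\Gamma)$ restricted to a block can be identified combinatorially from the graph, and the regular $S$-action on the block can then be reconstructed around them. Once stage~(ii) is in place, stages~(iii) and~(iv) reduce to standard permutation-group algorithms and invocations of the polynomial-time routines already developed in~\cite{PV}.
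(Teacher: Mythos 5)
There is a genuine gap, and it sits exactly where you flag "the principal obstacle": stage~(ii) is not merely technically unfinished, it is unworkable as stated. Theorem~\ref{th:crsg} bounds the number of \emph{conjugacy classes} of regular subgroups, not the number of subgroups; a single conjugacy class of regular $S$-subgroups of $\sym(\Delta)$ has size $|\sym(\Delta)|/|N_{\sym(\Delta)}(S)|$, which is super-exponential in $n$. Worse, in the hard case of the paper's dichotomy (the "symmetric type", Lemma~\ref{block}(ii)), the restriction of $\Gamma$ to a block is complete or empty, so the blockwise automorphism group is all of $\sym(\Delta)$ and the blockwise invocation of Theorem~\ref{th:crsg} lands in its exceptional case $K=\sym(\Delta)$, giving no leverage at all: every regular $S$-subgroup of $\sym(\Delta)$ is admissible locally. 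So "enumerate the semiregular $S$-subgroups realising the partition" is an exponential-size task, and the same objection applies to stage~(i) (the number of partitions into $|G:S|$ parts of size $|S|$ is astronomical, and the constraint "each part induces a central Cayley graph over $S$" does not cut it down in the symmetric-type case). The only way the count becomes polynomial is \emph{up to conjugacy in $K=\aut(\Gamma)$}, and exploiting that requires computing with $K$ --- precisely what you set out to avoid.

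The paper's proof goes the other way around. It first computes a generating set of $K$ in polynomial time (\cite[Corollary~1.2]{PV}); this is the indispensable ingredient. It then uses the normal/symmetric type dichotomy of Lemma~\ref{block}: in the normal type, $|K|\leq(2n^2\log n)^c\cdot c!$ is itself polynomial, so one brute-forces over $3$-generated subgroups of $K$ (every almost simple group is $3$-generated); in the symmetric type, one shows via Lemma~\ref{semireg} and the wreath-product structure of $K_{\mathcal L}$ that the socle of \emph{every} regular $H\cong G$ is conjugate in $K$ to $S_r$, so a $G$-base already lives inside $M=N_K(S_r)$, which has polynomial order and is computable by the Luks--Miyazaki normalizer algorithm. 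Your stage~(iv) (deduplication via Cayley isomorphism testing over $\aut(G)$, justified by Lemma~\ref{lem:Babai}) coincides with the paper's last step and is fine; but without the conjugation-into-$N_K(S_r)$ reduction, stages~(i)--(iii) do not terminate in polynomial time, and no "constructive blockwise version" of Theorem~\ref{th:crsg} is available to rescue them.
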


\begin{rem} We do not know whether the number of pairwise nonequivalent Cayley representations of~$\Gamma$ is bounded by a polynomial in~$n$ of degree not depending on the index~$c$. If it does, then this would be interesting to prove that such representations can be found within the same time.
\end{rem}

Slightly generalizing Babai's argument from~\cite{Babai}, one can prove (see Lemma~\ref{lem:Babai} below) that the pairwise nonequivalent Cayley representations of a Cayley graph $\Gamma$ over any group $G$ are in one-to-one correspondence with the conjugacy classes of regular subgroups of the group $\aut(\Gamma)$ isomorphic to~$G$. Thus, the following definition is a key to our arguments.

\begin{defn}\label{def:base}
A $G$-base $\cB$ of a permutation group $K$ is a maximal set of pairwise non-conjugate (in $K$)
regular subgroups of $K$ isomorphic to~$G$. If $\Gamma$ is a graph and $K=\aut(\Gamma)$, then $\cB$ is called a $G$-base of $\Gamma$.
\end{defn}

\begin{rem} According to our approach, in Definition~\ref{def:base} and further, by a graph we mean a directed graph.
\end{rem}

The notion of a $G$-base was suggested in~\cite{EMP} as a generalization of the notion of a \emph{cycle base} (see~\cite{EP,Muz}). One can check that all $G$-bases of $K$ have the same size denoted by $b_G(K)$ (and $b_G(\Gamma)$ if $K=\aut(\Gamma)$).

In this language (cf. Corollary~\ref{cor:base} below), Theorem~\ref{th:crsg} states that $b_G(\Gamma)\leq2$ if $\Gamma$ is a central Cayley graph over a simple group $G$. In fact, we prove that each regular subgroup of $\aut(\Gamma)$ is conjugate to either the group of right multiplications by elements of $G$ or the group of left multiplications. While in the case of an almost simple group $G$ with socle of index at most~$c$, the number $b_G(\Gamma)$ is bounded by $n^{f(c)}$, and Theorem~\ref{th:algorithm} claims that a $G$-base can be found within the same time.

\section{Regular subgroups and Cayley representations}\label{sec:crrs}

Let $\Omega$ be a finite set of size $n$ and let $K\leq\sym(\Omega)$ be a permutation group on $\Omega$. Recall that $K$ is called {\em semiregular} if each point stabilizer of $K$ is trivial; and $K$ is {\em regular} if $K$ is transitive and semiregular. Clearly, $K$ is semiregular if and only if $|K|=|\Delta|$ for every $\Delta$ from the set $\orb(K,\Omega)$ of orbits of $K$ on~$\Omega$. In the proof of Theorem~\ref{th:algorithm}, we need the following elementary fact.

\begin{lemm}\label{semireg}
Let $K$ and $M$ be semiregular subgroups of $\sym(\Omega)$, and $f$ an isomorphism from $K$ to~$M$. Then there exists $x\in\sym(\Omega)$ such that $g^f=g^x$ for every $g\in K$.
\end{lemm}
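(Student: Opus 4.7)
The plan is to construct $x$ orbit by orbit, using the fact that semiregular groups act regularly on each of their orbits. Since $K$ and $M$ are semiregular and $|K|=|M|$ (as $f$ is an isomorphism), every $K$-orbit on $\Omega$ has size $|K|$, every $M$-orbit on $\Omega$ has size $|K|$, and hence $K$ and $M$ have the same number $r=n/|K|$ of orbits. I fix any bijection $\Delta\mapsto\Sigma(\Delta)$ between $\orb(K,\Omega)$ and $\orb(M,\Omega)$, choose a base point $\omega_\Delta\in\Delta$ for each $K$-orbit, and choose a base point $\omega_{\Sigma(\Delta)}\in\Sigma(\Delta)$ for the paired $M$-orbit.

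Because $K$ acts regularly on $\Delta$, every point of $\Delta$ has the form $\omega_\Delta^h$ for a unique $h\in K$. I then define $x\in\sym(\Omega)$ by
\[
(\omega_\Delta^h)^x=\omega_{\Sigma(\Delta)}^{h^f}\qquad(h\in K,\ \Delta\in\orb(K,\Omega)).
\]
Since $f$ is a bijection $K\to M$ and $M$ acts regularly on $\Sigma(\Delta)$, the map $x$ sends $\Delta$ bijectively onto $\Sigma(\Delta)$; as the $K$-orbits partition $\Omega$, the permutation $x$ is well-defined on all of~$\Omega$.

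The verification is a one-line calculation. For any $\omega=\omega_\Delta^h\in\Omega$ and $g\in K$,
\[
\omega^{x g^f}=(\omega_{\Sigma(\Delta)}^{h^f})^{g^f}=\omega_{\Sigma(\Delta)}^{(hg)^f}=(\omega_\Delta^{hg})^x=\omega^{gx},
\]
where the first and last equalities use the definition of $x$, and the middle equality uses that $f$ is a homomorphism. Hence $x g^f=gx$ in $\sym(\Omega)$, so $g^f=x^{-1}gx=g^x$ as required.

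There is no genuine obstacle: the only thing to watch is bookkeeping of conventions. One must be careful that the definition of $x$ does not depend on representing a point in two different ways (this is guaranteed by semiregularity of $K$, which makes the exponent $h$ unique), and that the side of the action matches the chosen convention for conjugation $g^x=x^{-1}gx$; the chosen pairing of orbits and base points is arbitrary and any choice works.
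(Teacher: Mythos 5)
Your proof is correct and follows essentially the same route as the paper: pair up the orbits of $K$ and $M$, fix base points, define $x$ by $(\omega_\Delta^h)^x=\omega_{\Sigma(\Delta)}^{h^f}$ using semiregularity for well-definedness, and verify the conjugation identity by a direct computation. The only cosmetic difference is that you phrase the verification as the operator equation $xg^f=gx$ rather than checking it pointwise as the paper does.
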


\begin{proof}
Let $\orb(K,\Omega)=\{\Delta_1,\ldots,\Delta_m\}$ and $\orb(M,\Omega)=\{\Lambda_1,\ldots,\Lambda_m\}$, fix elements $\alpha_i\in \Delta_i$ and $\beta_i\in \Lambda_i$ for every $i\in\{1,\ldots,m\}$. Define $x$ as follows:
$$(\alpha_i^g)^x=\beta_i^{g^{f}}$$
for every $i\in\{1,\ldots,m\}$ and every $g\in K$. Since $K$ and $M$ are semiregular, $x$ is a well-defined bijection from $\Omega$ to itself.

Now take any $g\in K$ and $\alpha\in \Omega$. There exist the unique $i\in\{1,\ldots,m\}$ and $t\in K$ such that $\alpha_i^t=\alpha^{x^{-1}}$. By definition of $x$,
$$\alpha^{x^{-1}gx}=(\alpha_i^{tg})^x=\beta_i^{(tg)^f}=((\beta_i)^{t^f})^{g^f}=((\alpha_i^t)^x)^{g^f}=((\alpha^{x^{-1}})^x)^{g^f}=\alpha^{g^f}.$$
Thus, $g^x=g^f$ for every $g\in K$.
\end{proof}

Given a group $G$ and its element $g$ (its subgroup $L$), denote by $g_l$ and $g_r$ ($L_l$ and $L_r$) the elements (subgroups) of $\sym(G)$ induced by left and right multiplications by $g$ (by elements of $L$) respectively. Observe that $L_l$ and $L_r$ are always permutable, so $L^*=L_lL_r$ is a subgroup of $\sym(G)$. In this notation, $G_l$ and $G_r$ are regular subgroups of $\sym(G)$ isomorphic to~$G$, the intersection $G_l\cap G_r$ is isomorphic to the center~$Z(G)$ of~$G$, and if $Z(G)=1$ then $G^*=G_l\times G_r$ is the direct product of $G_l$ and~$G_r$. Further we refer to the set of all regular subgroups of $K\leq \sym(\Omega)$ isomorphic to $G$ as $\Reg(K,G)$ (this set can be empty).
\medskip

Let $\Gamma$ be a graph with vertex set $\Omega$, and let $K=\aut(\Gamma)$. Then $\Gamma$ has a Cayley representation over a group $G$ if and only if $\Reg(K,G)\neq\varnothing$. Indeed, if $\Gamma=\cay(G,X)$, then $G_r\in\Reg(K,G)$. Vise versa, for every $H\in\Reg(K,G)$ and a
permutation group isomorphism $f:H\to G_r$, one can construct a Cayley representation of $\Gamma$ over $G$ as follows. Let
$f_0:\Omega\to G$ be the bijection induced by~$f$, i.e.,
$$
f_0(\alpha^h)=f_0(\alpha)^{f(h)}
$$
for all $\alpha\in\Omega$ and $h\in H$. Denote by $X_0$  the set of all neighbors of $\alpha_0=f_0^{-1}(e)$ in $\Gamma$,
where $e$ is the identity element of~$G$. Then one can see that $f_0$ is an isomorphism from $\Gamma$ onto the graph
$\cay(G,X)$, where $X=f_0(X_0)$. Clearly, this graph is a Cayley representation of $\Gamma$ over $G$; we say that this representation is {\it associated} with $H$ and $f$.  It is easy to see that any Cayley representation of $\Gamma$ over $G$ is
associated with a suitable regular group $H\le\aut(\Gamma)$  and an isomorphism $f:H\to G_r$.

\begin{lemm}\label{lem:Babai}
Let $\Gamma_i$ be a Cayley representation of a graph $\Gamma$, associated with a regular group $H_i\le\aut(\Gamma)$
and an isomorphism $f_i:H_i\to G_r$, $i=1,2$. Then $\Gamma_1$ and $\Gamma_2$ are equivalent if and only if
$H_1$ and $H_2$ are conjugate in $\aut(\Gamma)$.
\end{lemm}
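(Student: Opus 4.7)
The guiding observation is that each bijection $f_{i,0}\colon\Omega\to G$ is itself a graph isomorphism $\Gamma\to\Gamma_i$, so conjugation by $f_{i,0}$ will produce a permutation-group isomorphism $\aut(\Gamma)\to\aut(\Gamma_i)$; the defining identity $f_{i,0}(\alpha^h)=f_{i,0}(\alpha)^{f_i(h)}$ reads as $f_{i,0}\,h\,f_{i,0}^{-1}=f_i(h)\in G_r$, so under this identification the subgroup $H_i$ corresponds exactly to $G_r\leq\aut(\Gamma_i)$. The plan is to exploit this to move both directions of the claimed equivalence to a clean computation in $\sym(G)$.

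For the easier, backward direction, I would suppose $\sigma\in\aut(G)$ is a Cayley isomorphism $\Gamma_1\to\Gamma_2$ and set $x:=f_{2,0}^{-1}\circ\sigma\circ f_{1,0}$; as a composition of three graph isomorphisms it lies in $\aut(\Gamma)$. For an arbitrary $h\in H_1$, writing $f_1(h)=u_r$, a direct substitution of $x$ gives
$$
f_{2,0}\,(xhx^{-1})\,f_{2,0}^{-1}=\sigma\,(f_{1,0}\,h\,f_{1,0}^{-1})\,\sigma^{-1}=\sigma\,u_r\,\sigma^{-1}=\sigma(u)_r\in G_r,
$$
the last equality using that $\sigma\in\aut(G)$ normalises $G_r$. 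Hence $xhx^{-1}\in H_2$, and $xH_1x^{-1}=H_2$ follows by cardinality.

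For the forward direction, starting from $x\in\aut(\Gamma)$ with $xH_1x^{-1}=H_2$, I would set $\sigma:=f_{2,0}\circ x\circ f_{1,0}^{-1}$, which is already a graph isomorphism $\Gamma_1\to\Gamma_2$, and then show $\sigma\in\aut(G)$. Computing $\sigma(g_1g_2)$ via the intertwining identity for $f_{1,0}$ and using $xhx^{-1}\in H_2$ yields $\sigma(g_1g_2)=\sigma(g_1)\cdot v$, where $v$ is determined by $f_2(xhx^{-1})=v_r$; specialising $g_1=e$ then forces $v=\sigma(g_2)$ provided that $\sigma(e)=e$. The main---and essentially only---technical point is this base-point condition: since $\sigma(e)=e$ amounts to $x(\alpha_{1,0})=\alpha_{2,0}$ and $H_2$ acts transitively on $\Omega$, I would first replace $x$ by $h_2x$ for the unique $h_2\in H_2$ that sends $x(\alpha_{1,0})$ to $\alpha_{2,0}$; this replacement does not disturb $xH_1x^{-1}=H_2$ because $h_2\in H_2$. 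Without this normalisation $\sigma$ would only be an affine bijection of $G$, not a group automorphism, so locating this freedom in the conjugator $x$ is the one step that requires care.
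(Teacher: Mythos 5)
Your proof is correct and follows essentially the same route as the paper's: conjugating by the identification bijections $f_{i,0}$ so that conjugacy of $H_1$ and $H_2$ in $\aut(\Gamma)$ translates into a Cayley isomorphism between $\Gamma_1$ and $\Gamma_2$ and back. The one place where you are more careful than the paper --- replacing the conjugator $x$ by $h_2x$ with $h_2\in H_2$ so that the resulting bijection $\sigma$ fixes the identity --- is precisely the step hidden in the paper's assertion that ``$\sigma$ takes $G_r$ to itself preserving the identity element,'' and your handling of it is exactly right.
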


\begin{proof} If $H_2=(H_1)^\gamma$ for some $\gamma\in\aut(\Gamma)$, then $\sigma=f_1^{-1}\gamma f_2$ is an isomorphism
from $\Gamma_1$ onto~$\Gamma_2$. Since $\sigma$ takes $G_r$ to itself preserving the identity element, it is a Cayley isomorphism. Thus,
the Cayley representations $\Gamma_1$ and $\Gamma_2$ are equivalent. Conversely, if $\sigma\in\aut(G)$ is Cayley isomorphism
from $\Gamma_1$ onto $\Gamma_2$, then $\gamma=f_1\sigma f_2^{-1}$ is an automorphism of $\Gamma$ such that $H_2=(H_1)^\gamma$.
\end{proof}

\begin{corl}\label{cor:base}
For every $G$-base $\cB$ of a graph $\Gamma$, the Cayley representations of $\Gamma$
associated with the groups $H\in\cB$, form the full system of pairwise nonequivalent Cayley representations
of $\Gamma$ over~$G$. In particular, the size of this system is equal to $b_G(\Gamma)$.
\end{corl}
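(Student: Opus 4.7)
The plan is to derive the corollary essentially as a bookkeeping consequence of Lemma~\ref{lem:Babai} together with Definition~\ref{def:base}. First I would recall, from the paragraph preceding Lemma~\ref{lem:Babai}, that every Cayley representation of $\Gamma$ over $G$ is associated with some regular subgroup $H\in\Reg(\aut(\Gamma),G)$ and some isomorphism $f\colon H\to G_r$; conversely, every such pair $(H,f)$ produces one. Thus the full collection of Cayley representations of $\Gamma$ over $G$ is parametrized by these pairs $(H,f)$.

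Next I would combine this parametrization with Lemma~\ref{lem:Babai}. Applied with $H_1=H_2=H$ and arbitrary isomorphisms $f_1,f_2\colon H\to G_r$, the lemma (taking $\gamma=\id$) shows that the equivalence class of the resulting Cayley representation does not depend on the choice of $f$, so one obtains a well-defined map from $\Reg(\aut(\Gamma),G)$ to equivalence classes of Cayley representations of $\Gamma$ over~$G$. Applied to arbitrary $H_1,H_2$, the lemma then tells us that this map factors through $\aut(\Gamma)$-conjugacy and becomes an injection on the quotient. Combined with the surjectivity noted in the previous paragraph, this yields a bijection between the set of $\aut(\Gamma)$-conjugacy classes in $\Reg(\aut(\Gamma),G)$ and the set of equivalence classes of Cayley representations of~$\Gamma$ over~$G$.

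By Definition~\ref{def:base}, a $G$-base $\cB$ of $\Gamma$ is precisely a set of representatives of those conjugacy classes. Hence the Cayley representations associated with the groups $H\in\cB$ form the full system of pairwise nonequivalent Cayley representations of $\Gamma$ over~$G$, and its size equals $|\cB|=b_G(\Gamma)$. There is no real obstacle in this argument: the only minor point requiring attention is checking that the dependence on the auxiliary choice of $f$ washes out in the equivalence class, but this is already subsumed by the forward direction of Lemma~\ref{lem:Babai} applied with $H_1=H_2$.
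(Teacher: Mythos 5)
Your argument is correct and is exactly the intended one: the paper states this corollary without proof as an immediate consequence of Lemma~\ref{lem:Babai} (equivalence of representations corresponds to conjugacy of the associated regular subgroups) together with the surjectivity of the association noted just before that lemma and the maximality in Definition~\ref{def:base}. Your added remark that the choice of the isomorphism $f$ washes out is a worthwhile point the paper leaves implicit, but it follows from the same lemma, so the route is the same.
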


According to our definition of a Cayley graph $\Gamma=\cay(G,X)$, the automorphism group $K=\aut(\Gamma)$ always contains the group $G_r$ of right multiplications. If $\Gamma$ is central, we can say more.

\begin{lemm}\label{lem:central}
Suppose that a Cayley graph $\Gamma=\cay(G,X)$ is central, and $K=\aut(\Gamma)$. Then the following hold:
\begin{enumerate}
\item[{\em(i)}] $G_l\leq K;$ in particular, $G^*=G_lG_r\leq K.$
\item[{\em(ii)}] $X=X^{-1}$ if and only if the permutation $\sigma:g\mapsto g^{-1}$ belongs to~$K;$ in particular, if $\Gamma$ is undirected, then $G_l$ and $G_r$ are conjugate in~$K$.

\end{enumerate}
\end{lemm}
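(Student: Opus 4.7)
The plan is to verify both parts by direct calculation on the arc set of $\Gamma=\cay(G,X)$, systematically exploiting that $X$ is normal in~$G$.

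For part~(i), I would fix $h\in G$ and check that the left multiplication $h_l\colon g\mapsto hg$ preserves the arc set. A typical arc $(g,xg)$, $x\in X$, is sent by $h_l$ to $(hg, hxg)$. Rewriting $hxg=(hxh^{-1})(hg)$ shows that this image is an arc of $\Gamma$ if and only if $hxh^{-1}$ belongs to~$X$, which is immediate from the normality of~$X$. Hence $h_l\in K$ for every $h\in G$, so $G_l\leq K$. Combined with the standard inclusion $G_r\leq K$, this gives $G^*=G_lG_r\leq K$.

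For part~(ii), I would first determine exactly when $\sigma\in K$. The image of an arc $(g,xg)$ under $\sigma$ is $(g^{-1}, g^{-1}x^{-1})$, and this is again an arc of $\Gamma$ precisely when $g^{-1}x^{-1}g\in X$. By normality of $X$ this condition is equivalent to $x^{-1}\in X$, so $\sigma\in K$ if and only if $X=X^{-1}$. In particular, if $\Gamma$ is undirected then $\sigma\in K$. To obtain the conjugacy of $G_l$ and $G_r$ via $\sigma$, I would check that for every $g,h\in G$,
$$
(\sigma^{-1}g_r\sigma)(h)=\sigma\bigl(g_r(h^{-1})\bigr)=\sigma(h^{-1}g)=g^{-1}h=(g^{-1})_l(h),
$$
which yields $(g_r)^\sigma=(g^{-1})_l$ and hence $(G_r)^\sigma=G_l$.

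There is no substantive obstacle: both statements reduce to a short direct computation using only the definition of $\cay(G,X)$ and the normality of~$X$. The only mild technicality is keeping track of the composition order in the conjugation identity $(g_r)^\sigma=(g^{-1})_l$, where it is convenient that $\sigma$ is an involution.
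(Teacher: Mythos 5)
Your proof is correct and follows essentially the same route as the paper: a direct check on arcs using normality of $X$ for both (i) and the "$X=X^{-1}\Rightarrow\sigma\in K$" direction, with the conjugation $(G_r)^\sigma=G_l$ verified explicitly (the paper states this last computation as obvious). The only differences are cosmetic choices of convention for left multiplication.
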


\begin{proof} (i) For every $g\in G$ and every $(h,xh)\in E(\Gamma)$, where $E(\Gamma)$ is the arc set of~$\Gamma$,
$$
(h,xh)^{g_l}=(g^{-1}h,g^{-1}xh)=(g^{-1}h,x^gg^{-1}h)\in E(\Gamma),
$$
because $X$ is normal.

(ii) Obviously, if $\sigma\in K$, then $X=X^{-1}$, and $G_l=G_r^\sigma$ and $G_r$ are conjugate in~$K$. If $X=X^{-1}$, then $$(h,xh)^\sigma=(h^{-1},h^{-1}x^{-1})=(h^{-1},(x^{-1})^hh^{-1})\in E(\Gamma),$$ for every $(h,xh)\in E(\Gamma)$.
\end{proof}

\begin{rem} It is easy to see that there are different ways to define central Cayley graphs and describe their basic properties, see, e.g., \cite[Section~1.1 and Corollary~2.5]{Li2}, where such graphs were called {\em holomorph Cayley graphs}.
\end{rem}

\section{Central Cayley graphs over simple groups}\label{sec:simple}

It is clear that any two isomorphic regular subgroups of the symmetric group are conjugate in it, so  Lemma~\ref{lem:Babai} and Lemma~\ref{lem:central}(ii) yield that in order to prove Theorem~\ref{th:crsg}, it suffices to establish the following assertion.

\begin{theo}\label{main}
Suppose that $\Gamma$ is a central Cayley graph over a simple group $G$, and $K=\aut(\Gamma)$. Then either $K=\sym(G)$ or $\Reg(K,G)=\{G_{l}, G_{r}\};$ in particular, $b_G(\Gamma)\leq2$.
\end{theo}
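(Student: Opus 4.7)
The statement splits into two cases depending on whether $G$ is abelian. If $G = \mZ_p$ is cyclic of prime order, then $G_l = G_r$, and by Burnside's theorem on transitive permutation groups of prime degree, $K$ is either $2$-transitive or contained in $\AGL_1(p)$. In the $2$-transitive case, the $K_e$-invariance of $X = N(e)$ together with transitivity of $K_e$ on $G \setminus \{e\}$ forces $X \in \{\varnothing, G \setminus \{e\}\}$, and so $K = \sym(G)$. In the other case, $G_r$ is the unique Sylow $p$-subgroup of $K$ and therefore the unique regular subgroup, so $\Reg(K, G) = \{G_r\}$.

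Assume now $G$ is nonabelian simple. By Lemma~\ref{lem:central}(i) and $Z(G) = 1$, $G^* = G_l \times G_r \leq K$. The argument hinges on establishing that $G^* \trianglelefteq K$ whenever $K \neq \sym(G)$. Granting this, consider an arbitrary regular $H \leq K$ with $H \cong G$: then $H \cap G^*$ is normal in the simple group $H$, so $H \cap G^* \in \{1, H\}$. If $H \cap G^* = 1$, then $H$ embeds in $K/G^*$; but $C_K(G^*) = G_l \cap G_r = 1$ implies $K \hookrightarrow \aut(G^*)$, hence $K/G^* \hookrightarrow \out(G^*) \cong \out(G) \wr S_2$, which is solvable by Schreier's conjecture --- contradicting $H \cong G$ nonabelian simple. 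So $H \leq G^*$, and Goursat's lemma identifies $H$ as either $G_l$, $G_r$, or the graph $\{(g_l, \phi(g)_r) : g \in G\}$ of some $\phi \in \aut(G)$. The graph is regular if and only if $\Fix(\phi) = 1$, which cannot occur for nonabelian simple $G$ by Thompson's theorem (for $\phi$ of prime order) together with its extensions to composite order. Thus $H \in \{G_l, G_r\}$.

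The main obstacle is the normality of $G^*$ in $K$ when $K \neq \sym(G)$. This is where the $\aut(G)$-invariant conjugacy class of involutions $T \subset G$ furnished by \cite[Lemma~12.1]{FGS} enters crucially. Setting $T_r = \{t_r : t \in T\} \subset G_r$ and $T_l = \{t_l : t \in T\} \subset G_l$, the $\aut(G)$-invariance of $T$ makes these sets invariant under conjugation by $N_K(G_r)$ and $N_K(G_l)$ respectively, and since $T$ generates the simple group $G$, we have $\langle T_r \cup T_l \rangle = G^*$. It therefore suffices to show that $K$-conjugation preserves $T_r \cup T_l$. A hypothetical $K$-conjugate of $t_r$ outside $T_r \cup T_l$ would force the existence of a regular subgroup of $K$ isomorphic to $G$ distinct from both $G_l$ and $G_r$; the delicate task is to rule this out, via a careful comparison of $K$-centralizers (using that $C_K(t_r) \supseteq G_l \cdot C_{G_r}(t_r)$ has order at least $|G| \cdot |C_G(t)|$) together with the fact that $K$ is not $2$-transitive when $K \neq \sym(G)$.
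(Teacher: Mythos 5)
There is a genuine gap at the step your whole nonabelian argument hinges on: the normality of $G^*=G_l\times G_r$ in $K$. Your sketch for it is circular. You propose to show that $K$-conjugation preserves $T_r\cup T_l$ by arguing that a conjugate of $t_r$ falling outside this set "would force the existence of a regular subgroup of $K$ isomorphic to $G$ distinct from both $G_l$ and $G_r$" and then ruling that out --- but no mechanism is given for why a stray conjugate of a single involution produces a regular subgroup, and, more seriously, excluding such a regular subgroup is precisely the conclusion of the theorem, which in your plan is itself deduced from the normality of $G^*$. The invariant class of involutions from \cite[Lemma~12.1]{FGS} cannot carry this weight: it only controls conjugation by elements already known to normalize $G_r$ or $G_l$. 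The paper obtains the needed containment $G^*\leq K\leq D(2,G)=\langle\sigma\rangle\ltimes(\aut(G)\ltimes G_r)$ (Lemma~\ref{l2}) from \cite[Lemmas~3.2 and~4.2]{PV}: imprimitivity of $K$ is excluded because a nontrivial block through $e$ would be a proper nontrivial normal subgroup of the simple group $G$, and in the uniprimitive case one invokes the CFSG-based classification of primitive groups containing a regular almost simple subgroup \cite[Theorem~1.4]{LPS}. Some input of this depth appears unavoidable, and your proposal does not supply a substitute.

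The remainder of your plan is essentially sound and close to the paper's. The abelian (prime order) case via Burnside is exactly the paper's argument. Once $H\leq G^*$ is known, Goursat reduces $H$ to $G_l$, $G_r$, or a twisted diagonal $G_\phi$; note only that $G_\phi$ is semiregular if and only if \emph{every} automorphism in the coset $\phi\Inn(G)$ is fixed-point-free (the element $g_l\phi(g)_r$ fixes $h$ precisely when $\phi(g)=g^h$), not merely $\phi$ itself --- but since $\Fix(\phi)=1$ is already necessary and impossible for nonabelian simple $G$, your conclusion stands. Here the paper argues differently: rather than citing the (also CFSG-dependent) nonexistence of fixed-point-free automorphisms of simple groups, it uses \cite[Lemma~12.1]{FGS} to write $\tau=\varphi\psi$ with $\varphi$ centralizing an invariant involution $t$ and $\psi$ inner, and exhibits an explicit fixed point of $t_l(t^\tau)_r$. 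Your Schreier-based exclusion of $H\cap G^*=1$ (via $C_K(G^*)=1$ and solvability of $\out(G)\wr S_2$) is a clean alternative to the paper's order count $|HG^*|=n^3>|D(2,G)|$; both work once normality of $G^*$ is in hand.
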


\begin{proof}

We start with a simple observation that the only $2$-transitive group which is the full automorphism group of a graph with the vertex set $\Omega$ is $\sym(\Omega)$. So in the case when $G$ is abelian, we are done by the Burnside theorem ~\cite{Burn}. Indeed, if $K$ is not $2$-transitive, then $K\leq\AGL(1,p)$ for a prime $p$, so $G_r$ being normal is the only subgroup of order $p$ in~$K$. Therefore, we may further assume that $G$ is nonabelian simple and $K$ is not $2$-transitive.\medskip

Following \cite{LPS}, for a group $G$ we define the holomorph $\hol(G)$ as the normalizer $N_{\sym(G)}(G_r)$ and set $D(2,G)$ to be the subgroup of $\sym(G)$ generated by  $\hol(G)$ and the permutation $\sigma:g\mapsto g^{-1}$. Since $G$ is nonabelian simple, groups $G_{l}$ and $G_{r}$ intersect trivially, hence
\begin{equation}\label{eq:G*}
G^{*}=G_{l}\times G_{r}=\Inn(G)\ltimes G_r\leq D(2,G)=\langle\sigma\rangle\ltimes(\aut(G)\ltimes G_r).
\end{equation}

It can be deduced form the classification of finite simple groups (CFSG), see, e.g, Tables~5,6 in p.~xvi \cite[Introduction]{CCNPW}, or~\cite{Kohl}, that
\begin{equation}\label{eq:Out}
|\out(G)|\leq\log n
\end{equation}
(hereinafter, all the logarithms are binary and $|G|=n$.). This fact and \eqref{eq:G*} directly yield

\begin{lemm}\label{l0}
$|D(2,G)|\leq 2n^2 \log n$.
\end{lemm}

The next lemma essentially proved in~\cite{PV} exploits the classification of regular almost simple subgroups of a primitive group~\cite[Theorem~1.4]{LPS}.

\begin{lemm}\label{l2}
$G^*\leq K\leq D(2,G)$.
\end{lemm}

\begin{proof}
The first inclusion follows from Lemma~\ref{lem:central}(i). If $K$ is imprimitive, then $G$ has a nontrivial proper normal subgroup by~\cite[Lemma~4.2]{PV} that in our case contradicts to simplicity of~$G$. So $K$ must be uniprimitive (primitive but not $2$-transitive). The rest follows from~\cite[Lemma~3.2]{PV}.
\end{proof}

The key lemma below is a reformulation of \cite[Lemma~12.1]{FGS}.

\begin{lemm}\label{l1}
If $G$ is a nonabelian simple group, then there exists an involution $t\in G$ such that $\aut(G)=C_{\aut(G)}(t)\Inn(G)$.
\end{lemm}

The next lemma completes the proof of the theorem.

\begin{lemm}\label{glgr}
$\Reg(K,G)=\{G_{l}, G_{r}\}$.
\end{lemm}

\begin{proof}
Let $H\in \Reg(K,G)$. The group $G^{*}$ is clearly normal in $D(2,G)$, so $H\cap G^{*}$ is normal in $H$. Since $H$ is simple, $H\cap G^{*}=H$ or $|H\cap G^{*}|=1$. In the latter case, $|HG^{*}|=n^3\leq |K|\leq |D(2,G)|$, a contradiction to Lemma~\ref{l0}. Thus, $H\leq G^{*}$.

Assume to the contrary that $H$ is neither $G_{l}$ nor $G_{r}$.  Then, due to simplicity of~$G$, the projections of $H\leq G_{l}\times G_{r}$ to each of the two factors are isomorphic to~$G$. It follows that there exists an automorphism $\tau\in\aut(G)$ such that
$$H=G_{\tau}=\{g_{l}(g^\tau)_{r}: g\in G\}.$$

Let us choose an involution $t\in G$ as in Lemma~\ref{l1}. Since $\tau\in\aut(G)$, there exist $\varphi\in C_{\aut(G)}(t)$ and $\psi\in\Inn(G)$ such that $\tau=\varphi\psi$. Fix any $x\in G$ such that $\psi$ is a conjugation by~$x$. Consider the permutation $h=t_{l}(t^\tau)_{r}$ from $H$. Clearly, $h$ is nontrivial.

Now, if $t\neq x$, then
$$
(tx)^h=(tx)^{t_{l}(t^\tau)_{r}}=t(tx)t^\tau=xt^{\varphi\psi}=
x (x^{-1}t^\varphi x)=tx.
$$
If $t=x$, then $t^\tau=(t^\varphi)^t=t^3=t$ and hence $t^h=t$. Thus in the both cases, the element $h\in G_{\tau}$ leaves at least one point fixed. Since $h$ is nontrivial, the group $H=G_\tau$ is not regular, a contradiction.
\end{proof}

\begin{rem} The proof of Lemma~\ref{glgr} is valid for any permutation group $K$ with $G^*\leq K\leq D(2,G)$.
\end{rem}\end{proof}

\section{Base of complete transposition graph}\label{sec:example}

A lower bound on the number of the pairwise nonequivalent central Cayley representation of the complete transposition graph mentioned in Introduction is a consequence of the following assertion.

\begin{theo}\label{th:example}
Suppose that $G=\sym(m)$ and $K=D(2,G)=\langle\sigma\rangle\ltimes(\aut(G)\ltimes G_r)$, where $\sigma:g\mapsto g^{-1}$. Then $b_G(K)\geq [\frac{m}{4}]$.
\end{theo}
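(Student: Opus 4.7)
The plan is to exhibit, for each $\sym(m)$-conjugacy class of involutions contained in the alternating subgroup $A = A_m$, an explicit regular subgroup of $K$ isomorphic to $G$, and then verify that distinct classes yield $K$-non-conjugate subgroups. Since an involution of $\sym(m)$ has cycle type $(2^t,1^{m-2t})$ with $t\ge 1$ and lies in $A$ precisely when $t$ is even, the number of such classes equals $[m/4]$, which will give exactly the required lower bound.

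For each involution $a\in A$ I would define
$$R_a=\{c_r:c\in A\}\cup\{a_l b_r:b\in G\setminus A\}\subseteq G^*\leq K.$$
Using $a^2=e$, normality of $A$ in $G$, and commutativity of $G_l$ and $G_r$, a routine calculation shows that $R_a$ is closed under products and inverses, hence a subgroup. The projection $x_l y_r\mapsto y$ restricts to a bijective homomorphism $R_a\to A\sqcup(G\setminus A)=G$, so $R_a\cong G$. Regularity follows because $A_r:=\{c_r:c\in A\}\leq R_a$ is semiregular on $G$ with the two orbits $A$ and $G\setminus A$ of size $|G|/2$, while each element $a_l b_r$ with $b\in G\setminus A$ interchanges these orbits; as $|R_a|=|G|$, this promotes transitivity to regularity.

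The heart of the argument is the $K$-conjugacy analysis. I would first compute that conjugation by $u_l v_r\in G^*$ sends $R_a$ to $R_{uau^{-1}}$, so the $G^*$-class of $R_a$ depends only on the $G$-conjugacy class of $a$. Next, using $\sigma c_r\sigma^{-1}=c_l$ and $\sigma(a_l b_r)\sigma^{-1}=a_r b_l=b_l a_r$, one obtains
$$\sigma R_a\sigma^{-1}=\{c_l:c\in A\}\cup\{b_l a_r:b\in G\setminus A\}.$$
This subgroup cannot equal any $R_{a'}$ with $a'\ne e$: in $\sigma R_a\sigma^{-1}$ the elements outside $A_l$ have constant $G_r$-factor $a_r$ and varying $G_l$-factor, whereas in $R_{a'}$ the situation is reversed, and since $G_l\cap G_r=1$ the two descriptions are incompatible. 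Combining the two computations, $R_a$ is $K$-conjugate to $R_{a'}$ if and only if $a$ and $a'$ are $G$-conjugate, whence the $[m/4]$ involution classes inside $A$ produce $[m/4]$ pairwise non-conjugate regular subgroups of $K$ isomorphic to $G$, and the bound $b_G(K)\geq [m/4]$ follows.

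I expect the main obstacle to be this last step, where one must be careful not to identify different classes via the "twist by $\sigma$". Once one notices that $\sigma$ merely swaps the roles of the factors $G_l$ and $G_r$, the non-collision becomes transparent, and the remainder of the proof reduces to routine bookkeeping: closure and regularity of $R_a$, together with the elementary enumeration of conjugacy classes of involutions of $\sym(m)$ lying in $A_m$.
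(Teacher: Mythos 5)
Your proposal is correct and follows essentially the same route as the paper: your subgroups $R_a$ are exactly the paper's $H_t=\langle t_lx_r\rangle A_r$ written out as explicit sets, and your conjugacy analysis (conjugation by $G^*$ tracks the $G$-class of $a$, while the $\sigma$-twist is excluded because it forces a full $G_l$-component, incompatible with $R_{a'}\cap G_l=1$) matches the paper's Lemmas~\ref{tr1}--\ref{tr3} in substance, with your projection argument for $R_a\simeq G$ being a slightly slicker alternative to the paper's socle argument. The only caveat is that your normal form $\sigma^\varepsilon u_lv_r$ for elements of $K$ presupposes $\aut(\sym(m))=\Inn(\sym(m))$, which fails for $m=6$; the paper disposes of $m\leq 7$ separately, where $[\frac{m}{4}]\leq 1$ and the bound is trivial.
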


\begin{proof} If $m\leq 7$, then $[\frac{m}{4}] \leq 1$ and the theorem is obvious. Further we assume that $m\geq 8$. This condition provides that $\aut(G)=\Inn(G)$. Therefore, $K=\langle\sigma\rangle\ltimes G^*$, where $G^*=G_l\times G_r;$ every $k\in K$ is uniquely represented as
\begin{equation}\label{eq:k}
k=\sigma^\varepsilon u_l z_r,\mbox{ where }\varepsilon\in\{0,1\},\mbox{ and }u, z\in G;
\end{equation}
and $z_r^\sigma=z_l$ for every $z\in G$.

Set $A=\soc(G)=\alt(m)$ and fix an element $x\in G\setminus A$. For every $y\in G$ set $\tau_y=y_lx_r\in G^*$ and $H_y=\langle\tau_y\rangle A_r$. Since $A_r$ is normal in $G^*$, $H_y$ is a subgroup of $G^*$, and $H_y=G_r$ if and only if $y=1$.

\begin{lemm}\label{tr1}
If $t$ is an involution from $A$, then $H_t$ is a regular subgroup of $K$, isomorphic to~$G$.
\end{lemm}

\begin{proof} Assume that $g=g^{\tau_tz_r}=tgxz$ for some $g\in G$ and $z\in A$. Since $t$ and $z$ are even permutations, while $x$ is an odd one, the permutations $g$ and $tgxz$ are of distinct parities, a contradiction. Therefore, $H_t$ is a semiregular subgroup. In fact, it is a regular subgroup due to $|H_t|=|G|$. Indeed, $H_t>A_r$, because $t_l\neq1$, and $|H_t:A_r|\leq2$, because $\tau_t^2\in A_r$.

Let $N$ be a nontrivial normal subgroup of $H_t$ distinct from $A_r$. Since $A_r$ is simple and $|H_t:A_r|=2$, it follows that $H_t=N\times A_r$, where $|N|=2$. Let $n=t_lx_rz_r$ be the involution in~$N$. It follows that $u_rn=nu_r$ and, consequently, $uxz=xzu$ for every $u\in A$. Then $xz\in C_G(A)$, where $C_G(A)$ is trivial due to simplicity of $A$. Thus, $xz=1$, which is impossible, because $x$ and $z$ are the permutations of distinct parities. Thus, $A_r=\soc(H_t)$, hence $A_r\unlhd H_t\leq\aut(A_r)\simeq G$. Finally, $|H_t|=|G|$ implies $H_t\simeq G$.
\end{proof}

\begin{lemm}\label{tr2}
Let $t$ be an involution from $A$ and $y\in G$. If $y\neq t$, then $y_lz_r\not\in H_t$ for every $z\in G$. In particular, $H_y=H_t$ if and only if $y=t$.
\end{lemm}

\begin{proof} Assume that $y_lz_r\in H_t$ for some $z\in G$. Then $y_lz_{r}=t_lx_ru_{r}$ for some $u\in A$ due to Lemma~\ref{tr1}. Since $G^*=G_l\times G_r$, we conclude that $y=t$, a contradiction.
\end{proof}

\begin{lemm}\label{tr3}
Let $t$ and $y$ be involutions from $A$. If $H_t$ and $H_y$ are conjugate in $K$, then $t$ and $y$ are conjugate in~$G$.
\end{lemm}

\begin{proof} Suppose that $H_t$ and $H_y$ are conjugate by $k=\sigma^\varepsilon u_l z_r\in K$, cf. \eqref{eq:k}. Straightforward computations yield
$$(\tau_t)^k=\begin{cases}
(utu^{-1})_l(z^{-1}xz)_r\mbox{ for }\varepsilon=0,\\
(ux^{-1}u^{-1})_l(z^{-1}t^{-1}z)_r\mbox{ for }\varepsilon=1.
\end{cases}$$
Since $(\tau_t)^k\in H_y$, Lemma~\ref{tr2} implies that either $y=t^{u^{-1}}$ or $y=(x^{-1})^{u^{-1}}$. The latter case is impossible, because $y$ is an even permutation, while $x^{-1}$ is odd.
\end{proof}

Let $T$ be the set of involutions from $\soc(G)=\alt(m)$ pairwise non-conjugate in $G$. Lemmas~\ref{tr1} and~\ref{tr3} yield $b_G(K)\geq |T|\geq [\frac{m}{4}].$
\end{proof}

\begin{corl}\label{cor:example}
Let $G=\sym(m)$, $m\geq 5$, $X=X^{-1}$ a proper normal subset of $G\setminus\soc(G)$, and $\Gamma=\cay(G,X)$. Then the number of pairwise nonequivalent central Cayley representations of $\Gamma$ is at least $[\frac{m}{4}]$.
\end{corl}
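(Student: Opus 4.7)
The plan is to identify $\aut(\Gamma)$ with $K=D(2,G)$ closely enough to transport the lower bound of Theorem~\ref{th:example} to $\aut(\Gamma)$, and then invoke Corollary~\ref{cor:base}. The cases $5\leq m\leq 7$ give $[\frac{m}{4}]\leq 1$, so the claim is immediate there, and I restrict to $m\geq 8$, in which range $\aut(G)=\Inn(G)$ and $K=\langle\sigma\rangle\ltimes G^*$. The inclusion $K\leq\aut(\Gamma)$ is immediate from Lemma~\ref{lem:central}: normality of $X$ yields $G^*\leq\aut(\Gamma)$, and $X=X^{-1}$ yields $\sigma\in\aut(\Gamma)$. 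Hence the $[\frac{m}{4}]$ subgroups $H_{t}\in\Reg(K,G)$ produced by Theorem~\ref{th:example} also belong to $\Reg(\aut(\Gamma),G)$, and by Corollary~\ref{cor:base} the assertion reduces to showing that these $H_t$ are pairwise non-conjugate in $\aut(\Gamma)$, not merely in~$K$.

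Assume $\gamma\in\aut(\Gamma)$ conjugates $H_t$ to $H_{t'}$ for distinct $t,t'\in T$. Since $A_r=\soc(H_t)=\soc(H_{t'})$ is characteristic in each of these subgroups, $\gamma$ lies in $N:=N_{\aut(\Gamma)}(A_r)$. The key claim is $N\leq G^*$; granting it, $\gamma\in K$ and Lemma~\ref{tr3} forces $t$ and $t'$ to be $G$-conjugate, contradicting the choice of~$T$.

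To prove $N\leq G^*$, I exploit that $X\subseteq G\setminus A$ makes $\Gamma$ bipartite with parts $A$ and $B=G\setminus A$ coinciding with the $A_r$-orbits. Any $\gamma\in N$ permutes these parts; let $N^+$ denote the subgroup preserving them. The centralizer $C^+:=C_{\aut(\Gamma)}(A_r)\cap N^+$ consists of maps $c$ with $c|_A=\alpha_l|_A$ and $c|_B=\beta_l|_B$ for some $\alpha,\beta\in A$, and the edge-preservation constraint reads $\beta X\alpha^{-1}=X$, equivalently $\alpha^{-1}\beta\in L:=\{g\in G:gX=X\}$. The set $L$ is a normal subgroup of $\sym(m)$; both $L=G$ and $L=A$ are excluded by the hypothesis that $X$ is a nonempty proper subset of $G\setminus A$, so $L=1$ and hence $C^+=A_l$. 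Since $N^+/C^+$ embeds into $\aut(A_r)\simeq\aut(A)\simeq G$, one obtains $|N^+|\leq|A|\cdot|G|=2|A|^2$; on the other hand $G^*\cap N^+$ already has order $2|A|^2$ (the elements $g_l h_r$ with $g,h\in G$ of the same parity) and its image in $\aut(A)$ exhausts all of $\aut(A)$, so $N^+=G^*\cap N^+\leq G^*$. Any $\gamma\in N$ that swaps the two parts can be written as $x_l\cdot\gamma^+$ with $x_l\in G_l\leq G^*$ a fixed part-swap (take any $x\in B$) and $\gamma^+\in N^+\leq G^*$, so $\gamma\in G^*$ as well.

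The main obstacle is the centralizer computation $C^+=A_l$: it rests on the observation that the left-multiplication stabilizer $L$ of the normal subset $X$ is itself normal in $\sym(m)$, hence trivial under the properness and non-emptiness of $X$ inside $G\setminus\soc(G)$. Once $C^+$ is identified, the order comparison pinning down $N^+=G^*\cap N^+$ and the part-swap reduction from $N^-$ to $N^+$ are mechanical.
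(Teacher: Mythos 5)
Your proposal is correct, but it takes a genuinely different route from the paper. The paper's proof is essentially a one-line citation: by \cite[Theorem~1.3]{PV} the hypotheses on $X$ force $\aut(\Gamma)=D(2,G)$ exactly, after which Theorem~\ref{th:example} and Corollary~\ref{cor:base} give the bound immediately. You instead avoid invoking that external determination of the full automorphism group and prove only the weaker statement that you actually need, namely $N_{\aut(\Gamma)}(A_r)\leq G^*$: since $A_r=\soc(H_t)$ for each $t\in T$ (from the proof of Lemma~\ref{tr1}), any element of $\aut(\Gamma)$ conjugating $H_t$ to $H_{t'}$ lies in this normalizer, hence in $K=D(2,G)$, and Lemma~\ref{tr3} finishes the job. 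Your computation of the normalizer is sound: the bipartition of $\Gamma$ into the two $A_r$-orbits $A$ and $G\setminus A$, the identification of the part-preserving centralizer of $A_r$ as pairs of left multiplications subject to $\beta X\alpha^{-1}=X$, the observation that the left stabilizer $L$ of the normal set $X$ is itself normal in $\sym(m)$ and hence trivial for nonempty proper $X\subseteq G\setminus A$, the $N/C$ order count against $|G^*\cap N^+|=2|A|^2$, and the reduction of part-swapping elements to part-preserving ones via a fixed $x_l$ are all correct (the discrepancy between $\alpha^{-1}\beta$ and $\beta\alpha^{-1}$ is immaterial since $L$ is normal). What the paper's route buys is brevity and the precise equality $\aut(\Gamma)=D(2,G)$; what yours buys is self-containedness and robustness --- it only needs the containment $K\leq\aut(\Gamma)$ from Lemma~\ref{lem:central} plus elementary structure of $\sym(m)$, so it would survive in settings where the full automorphism group is not known.
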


\begin{proof} By~\cite[Theorem~1.3]{PV}, $\aut(\Gamma)=D(2,G)$. The rest follows from Corollary~\ref{cor:base} and Theorem~\ref{th:example}.
\end{proof}

\begin{rem} The complete transposition graph over $G=\sym(m)$ mentioned in Introduction is a particular case of graphs from the corollary (if $m\geq5$).  If $m<5$, then the conclusion of the corollary remains obviously true for such a graph. It is worth noting that the equality $\aut(\Gamma)=D(2,G)$ for the complete transportation graph was obtained in~\cite[Theorem~1.1]{Gan}.
\end{rem}

\section{Central Cayley graph over almost simple groups}\label{sec:algorithm}

In order to present our arguments we need some notation concerning imprimitive permutation groups. First, if $\Delta\subseteq\Omega$ is a block of $K\leq\sym(\Omega)$, then the group $\{f^{\Delta}:~f\in K, \Delta^f=\Delta\}$ is denoted by $K^{\Delta}$, where $f^\Delta$ is the induced bijection from $\Delta$ to~$\Delta^f$.
For an imprimitivity system $\mathcal{L}$ of a transitive group $K$, set $K_{\mathcal{L}}$ and $K^{\mathcal{L}}$ to be, respectively, the intersection of the setwise stabilizers $K_{\{\Delta\}}$ for all $\Delta\in\mathcal{L}$ and the permutation group induced by the action of~$K$ on $\mathcal{L}$.

Let $G$ be a finite group and $K\leq \sym(G)$ such that $K\geq G^{*}=G_lG_r$. If $X$ is a block of $K$ containing the identity element~$e$ of~$G$, then $X$ is a subgroup of $G$ because $K\geq G_r$ (cf.~\cite[Theorem~24.12]{Wi}); moreover, $X$ is normal in $G$ because $K\geq G_{l}$. Denote by~$L=L(K)$ the intersection of all non-singleton blocks of $K$ containing~$e$. Following~\cite{PV}, we say that $L$ is the \emph{minimal block} of $K$. Denote the block system of $K$ containing $L$ by $\mathcal{L}=\mathcal{L}(K)$. Clearly, $\mathcal{L}$ coincides with the set of all $L$-cosets, $L^{*}\leq K_{\mathcal{L}}$, and $\orb(L^{*},G)=\mathcal{L}$.\medskip

The following assertion is our key argument in proving Theorem~\ref{th:algorithm}. Recall that $\cK_c$ is the class of almost simple groups $G$ with $|G:\soc(G)|\leq c$.

\begin{theo}\label{th:baseAS}
If $\Gamma$ is a central Cayley graph over $G\in\cK_c$, then $b_G(\Gamma)$ is polynomial in~$n$.
\end{theo}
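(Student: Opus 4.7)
The plan is to bound $|K| = |\aut(\Gamma)|$ polynomially in $n$ by induction on $c$, and then deduce a polynomial bound on $b_G(\Gamma)$ by counting generating tuples. By Lemma~\ref{lem:Babai}, $b_G(\Gamma)$ equals the number of $K$-conjugacy classes of regular subgroups of $K$ isomorphic to $G$, which is at most the total number of such subgroups. Since $G$ is almost simple, it is $d$-generated for a bounded constant $d$, so every subgroup of $K$ isomorphic to $G$ is determined by a choice of $d$ elements of $K$, giving at most $|K|^{d}$ such subgroups. Therefore it suffices to prove $|K| \leq n^{f(c)}$ for some function $f$ depending only on~$c$.

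To bound $|K|$, I would analyze the minimal block $L = L(K)$ and its block system $\mathcal{L}$. By Lemma~\ref{lem:central}, $G^* = G_l G_r \leq K$, so $L$ is a normal subgroup of $G$. Because the only normal subgroups of the almost simple group $G$ are $\{e\}$ and those containing $S = \soc(G)$, and $L$ is a non-singleton block, we have $S \leq L \leq G$. Consequently $|\mathcal{L}| = |G:L| \leq |G:S| \leq c$ and $|K^{\mathcal{L}}| \leq c!$.

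The bound on $|K|$ proceeds by induction on $c$. The base case $c = 1$ is the simple case, where Lemmas~\ref{l0} and~\ref{l2} yield $|K| \leq 2n^{2}\log n$. For the inductive step, I split on whether $K$ is primitive. If $L = G$, then $K$ is primitive containing the regular almost simple subgroup $G_r$, and the classification in~\cite[Theorem~1.4]{LPS} gives $|K| \leq \poly(n)$ directly. If $L < G$, then $L$ is itself almost simple with $|L : \soc(L)| \leq c-1$; moreover $\Gamma|_L = \cay(L, X \cap L)$ is a central Cayley graph over $L$, since $X \cap L$ is normal in $L$ (both $X$ and $L$ being normal in $G$). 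By the inductive hypothesis, $|\aut(\Gamma|_L)| \leq \poly(|L|)$. Because $G^*$ permutes the blocks of $\mathcal{L}$ transitively, the induced subgraphs $\Gamma|_\Delta$ are mutually isomorphic, and the restriction homomorphism $K_{\mathcal{L}} \hookrightarrow \prod_{\Delta \in \mathcal{L}} K_{\mathcal{L}}^{\Delta}$ yields $|K_{\mathcal{L}}| \leq |\aut(\Gamma|_L)|^{|\mathcal{L}|} \leq \poly(|L|)^{c} \leq \poly(n)$. Combined with $|K^{\mathcal{L}}| \leq c!$, this gives $|K| \leq \poly(n)$.

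The main obstacle I anticipate is the primitive subcase: one must verify that every entry of the classification in~\cite[Theorem~1.4]{LPS} for an almost simple (not merely simple) regular subgroup leads to a polynomially bounded primitive overgroup, and possibly adapt the argument of~\cite{PV} beyond the simple socle setting. A secondary concern is tracking how the polynomial degree degrades with $c$ through the induction; the recursion $f(c) \leq c \cdot f(c-1) + O(1)$ produces an admissible, though rapidly growing, function $f(c)$ depending only on~$c$. Once $|K| \leq n^{f(c)}$ is established, the generation bound $b_G(\Gamma) \leq |K|^{d} \leq n^{d f(c)}$ completes the argument.
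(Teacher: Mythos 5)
There is a genuine gap: your entire strategy rests on the claim that $|K|=|\aut(\Gamma)|\leq n^{f(c)}$, and that claim is false. Already in the base case $c=1$ it fails: Theorem~\ref{main} explicitly allows $K=\sym(G)$ (e.g.\ for $X=\varnothing$ or $X=G\setminus\{e\}$), and Lemma~\ref{l2} is only available when $K$ is not $2$-transitive. More seriously, in the almost simple setting take $X=S\setminus\{e\}$ with $S=\soc(G)$: then $\Gamma$ is a disjoint union of $c$ complete graphs on the $S$-cosets and $\aut(\Gamma)=\sym(S)\wr\sym(c)$, of order $(|S|!)^{c}\,c!$, which no polynomial in $n$ can bound. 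This is exactly the ``symmetric type'' alternative of Lemma~\ref{block}(ii), where $K=K_{\cL}\wr K^{\cL}$ with $(K_{\cL})^{\Delta}=\sym(\Delta)$; your inductive step inherits the same defect, since $\aut(\Gamma|_L)$ can itself be $\sym(L)$, so the recursion $|K_{\cL}|\leq|\aut(\Gamma|_L)|^{|\cL|}$ bounds $|K|$ by something super-exponential, not polynomial. Your route of ``bound $|K|$, then count $d$-generated subgroups'' therefore cannot be repaired by tightening constants: the group to be searched is simply too large.

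Your argument does coincide with the paper's in the normal type case (Lemma~\ref{block}(i)), where $|K|\leq(2n^2\log n)^c\,c!$ and the $3$-generation count gives $|\Reg(K,G)|=\poly(n)$. What is missing is the idea needed for the symmetric type: instead of bounding $|K|$, one shows that for every $H\in\Reg(K,G)$ the socle $T=\soc(H)$ is semiregular on each block, hence by Lemma~\ref{semireg} conjugate to $S_r^{\Delta}$ inside each $\sym(\Delta)$, and since $K_{\cL}=\prod_{\Delta}\sym(\Delta)$ these local conjugations assemble to a global one with $T=S_r^{k}$. Thus every regular subgroup is conjugate into the normalizer $N=N_K(S_r)$, whose order \emph{is} polynomial in $n$ (via $|N_{\sym(G)}(S_r)|\leq|\aut(S_r)|\,|C_{\sym(G)}(S_r)|$), and the subgroup count is carried out in $N$ rather than in $K$. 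Without some such conjugacy reduction, the symmetric case is not covered by your proposal.
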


\begin{proof}
Let $K=\aut(\Gamma)$. It suffices to point out a set of subgroups of~$K$, which includes some $G$-base of $K$ and has the size polynomial in~$n$.
The lemma below summarizes some properties of $K$ proved in~\cite[Lemma~4.2, Proposition~6.1, and Theorem~6.2]{PV}.

\begin{lemm}\label{block}
Let $L=L(K)$ be the minimal block of $K$, and $\mathcal{L}=\mathcal{L}(K)$. Then $L$ is an almost simple group containing the socle $S$ of $G$ and one of the following holds:
\begin{enumerate}

\item[{\em(i)}]  $L=S$ and $K^L\leq D(2,L);$

\item[{\em(ii)}] $K=(K_{\mathcal{L}})\wr K^{\mathcal{L}}$ is the full wreath product and $(K_{\mathcal{L}})^\Delta=\sym(\Delta)$ for every $\Delta\in \mathcal{L}$.
\end{enumerate}
\end{lemm}

Following~\cite{PV}, we say that $K$ is of {\em normal type} provided Item~(i) of Lemma~\ref{block} holds, and of {\em symmetric type} otherwise.

Let $K$ be of normal type. Since $K\leq K^L\wr K^{\mathcal{L}}$ and $|\cL|\leq c$, Item~(i) of Lemma~\ref{block} and Lemma~\ref{l0} yield that $$|K|\leq|K^L|^c|\sym(\cL)|\leq(2n^2\log n)^c\cdot c!=\poly(n).$$  By~\cite{DL}, every almost simple group is $3$-generated. Hence the number of the subgroups of $K$ isomorphic to $G$ is polynomial in the size of~$K$. It follows that $|\Reg(K,G)|$ is polynomial in~$n$, and so is any $G$-base of~$K$.

Therefore, we may suppose that $K$ is of symmetric type. By Lemma~\ref{block}, the minimal block $L$ is an almost simple group with the socle $S$, hence every $H\in\Reg(K,G)$ includes a normal subgroup isomorphic to~$L$. If $T=\soc(H)$ does not stabilize each $\Delta\in\cL$, then $T$ acts faithfully on the set $G/L$ of size at most~$c$. In this case, the order of $G$ is bounded by a function depending only on~$c$, and we are obviously done.  Therefore, $T$ is a semiregular subgroup of~$K_{\mathcal{L}}$, in particular, the same is true for $S_r=\soc(G_r)$. Lemma~\ref{semireg} implies that $T^\Delta$ and $S_r^\Delta$ are conjugate in $(K_{\mathcal{L}})^\Delta=\sym(\Delta)$ for each $\Delta\in\cL$. However, $K_{\mathcal{L}}$ is the direct product of the symmetric groups $\sym(\Delta), \Delta\in\cL$, by Lemma~\ref{block}(ii), so there is $k\in K_{\mathcal{L}}$ such that $T=S_r^k$.

Thus, $T$ is conjugate to $S_r$ in~$K$. It follows that $H$ is conjugate (in $K$) to a regular subgroup from the normalizer $N=N_{K}(S_r)=N_{\sym(G)}(S_r)\cap K$ of $S_r$ in~$K$. It is easy to see that $|N|=\poly(n)$. Indeed, $|N_{\sym(G)}(S_r)|\leq|\aut(S_r)||C|$, where $C=C_{\sym(G)}(S_r)$ is the centralizer of~$S_r$. Since $S_r$ is a simple group, $|\aut(S_r)|\leq n\log n$, see~\eqref{eq:Out}. On the other hand, the set $\orb(S_r,G)$ is an imprimitivity system for $C$, so $C$ is embedded in the corresponding imprimitive wreath product. It follows that $|C|\leq|C_{\sym(\Delta)}(S_r)|^c|\sym(c)|$ for $\Delta\in\orb(S_r,G)$.  However, by semiregularity of $S_r$, $|C_{\sym(\Delta)}(S_r)|=|S_r|=n$, and we are done. Thus, if $K$ is of symmetric type, then the size of $\Reg(N,G)$ is polynomial in~$n$, and so is the size of a $G$-base in $K$.  \end{proof}

\textbf{Proof of Theorem~\ref{th:algorithm}.} Let $\Gamma$ be a central Cayley graph over $G\in\cK_c$. According to Corollary~\ref{cor:base}, in order to find the full system of pairwise nonequivalent central Cayley representations of~$\Gamma$, it suffices to find a $G$-base $\cB$ of $\Gamma$, that is a maximal set of pairwise non-conjugate regular subgroups of $K=\aut(\Gamma)$, isomorphic to $G$. Theorem~\ref{th:baseAS} implies that the size of $\cB$ is polynomial in~$n$. Here we show that $\cB$ can be found efficiently.

We begin with the observation that a generating set of $K$ can be found in time $\poly(n)$ by~\cite[Corollary~1.2]{PV}. Applying standard algorithms for permutation groups (here our main source is~\cite{Seress}), one can find the socle $S_r=\soc(G_r)$, the minimal block $L=L(K)$ and $\cL=\cL(K)$ in time $\poly(n)$, and check whether $K$ is of symmetric type or normal type.

If $K$ is of symmetric type, then put $M=N_K(S_r)=N_{\sym(G)}(S_r)\cap K$. Note that $N_{\sym(G)}(S_r)$ can be found in time $\poly(n)$ due to~\cite[Corollary~3.24]{LM}. If $K$ is of normal type, we simply put $M=K$. In both cases, arguments from the proof of Theorem~\ref{th:baseAS} show that $|M|=\poly(n)$ and the set $\Reg(M,G)$ contains a $G$-base of~$K$.

The set $\Reg(M,G)$ can be found in time polynomial in~$|M|$ by standard procedures. Indeed, it suffices to iterate all $3$-generated subgroups of $M$ and for each of them check whether it is regular and isomorphic to~$G$. In particular, $|\Reg(M,G)|=\poly(n)$.

It remains to choose subgroups in $\Reg(M,G)$ pairwise non-conjugate in~$K$. Subgroups $H_1, H_2\in\Reg(K,G)$ are conjugate in $K$, if and only if the corresponding Cayley graphs $\Gamma_1$ and $\Gamma_2$ are Cayley isomorphic (Lemma~\ref{lem:Babai}). However, testing Cayley isomorphism of two Cayley graphs over $G$ can be easily done in time polynomial in the order of $\aut(G)$, which, in our case, is polynomial in~$n$, because $G$ is almost simple (see~\eqref{eq:Out}). This completes the proof of Theorem~\ref{th:algorithm}.\medskip

\noindent\textbf{Acknowledgments.} The authors are grateful to Prof. Ilia Ponomarenko for fruitful discussions and helpful suggestions.\medskip

\noindent\textbf{Data availability.} Data sharing is not applicable to this article as no datasets were generated or analyzed.\medskip

\noindent{\bf Statements and Declarations.} The authors declare that they have no conflict of interest.

\end{document}